\documentclass{article}
\usepackage{amssymb,amsmath,wasysym,amsthm,hyperref,xcolor,booktabs}

\newtheorem{theorem}{Theorem}
\newtheorem{lemma}[theorem]{Lemma}
\newtheorem{corollary}[theorem]{Corollary}
\newtheorem{conjecture}[theorem]{Conjecture}

\begin{document}
\vspace*{-2cm}

\Large
\begin{center}
Three-term arithmetic progressions of consecutive powerful numbers \\ 

\hspace{10pt}

\large
Wouter van Doorn \\

\hspace{10pt}

\end{center}

\hspace{10pt}

\normalsize

\vspace{-25pt}
\centerline{\bf Abstract}
We show that infinitely many three-term arithmetic progressions $N, N+d, N+2d$ of powerful numbers exist with $d = 2\sqrt{N} + 1$. We further conjecture that infinitely many of these progressions consist of three consecutive terms in the sequence of powerful numbers, which would answer a question of Erd\H{o}s in the negative.

\section{Introduction}
We say that a positive integer $n$ is \emph{powerful} if for every prime divisor $p$ of $n$ we have that $p^2$ divides $n$. A well-known conjecture by Erd\H{o}s, Mollin and Walsh \cite{Er76d, MW} concerning powerful numbers states that there do not exist three consecutive integers $n, n+1, n+2$ which are all powerful. Although Chan \cite{Chan2} and She \cite{She} did manage to prove various partial results in this direction, it is still open in general. A more modest question asks for powerful integers forming an arithmetic progression, and here substantially more is known. \\

For example, one can find arbitrarily long arithmetic progressions of powerful numbers, simply by letting both the starting value and the common difference be equal to the square of the least common multiple of the first $n$ positive integers. More interestingly, Bajpai, Bennett, and Chan \cite{BBC} proved that there are infinitely many four-term progressions of \emph{coprime} powerful numbers, while Bennett and Walsh \cite{BW} found such an example with `merely' $111$ digits. Alternatively, if one requires the terms of the progression to be close together, then Chan \cite{Chan} notably showed the existence of infinitely many three-term progressions $N, N+d, N+2d$ of powerful integers with $d \le 4\sqrt{N} + O(1)$, while the abc-conjecture implies the lower bound $d \gg_{\epsilon} N^{1/2 - \epsilon}$ for every $\epsilon > 0$. \\

In this note we modify Chan's construction, and show that $d \le 2\sqrt{N} + 1$ can be attained. Even though this improvement may seem marginal, it turns out that this reaches the threshold relevant for a different question of Erd\H{o}s, which concerns consecutive elements in the sequence of powerful numbers. More precisely, in \cite{Er76d} Erd\H{o}s asked the following question, which is now recorded as Erd\H{o}s Problem \#938 on Bloom's website \cite{Bloom938}: are there only finitely many three-term arithmetic progressions of consecutive powerful numbers? We conjecture that our construction in fact yields infinitely many such examples.

\section{Acknowledgments and AI tool disclosure}
We thank Thomas Bloom for maintaining his beautiful website and for personal discussions on this topic. \\

In the making of this paper, ChatGPT was used as a sounding board, for proofreading and for supplying numerical examples, most notably the calculation of the recurrence relation \eqref{eq:recur} in Section \ref{constr} and Table \ref{tab:consecutive-powerful-ap} in Section \ref{data}. The paper itself was entirely human-generated.

\section{A Pellian construction} \label{pell}
\subsection{Three-term progressions with small common difference}
We start off with a sharpening of the unconditional upper bound of \cite[Theorem 7]{Chan}.

\begin{theorem}
There are infinitely many three-term arithmetic progressions $$N, N+d, N+2d$$ of powerful numbers with $d = 2\sqrt{N} + 1$.
\end{theorem}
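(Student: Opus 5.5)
The plan is to take $N$ to be a perfect square, $N=m^2$. Then $d=2m+1$ is an integer, and the first two terms are automatically powerful:
$$N=m^2,\qquad N+d=m^2+2m+1=(m+1)^2 .$$
The third term is
$$N+2d=m^2+4m+2=(m+2)^2-2 .$$
Writing $x=m+2$, the theorem therefore reduces to showing that $x^2-2$ is powerful for infinitely many integers $x\ge 3$.

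To produce such $x$, I would use a Pell-type equation with a fixed coefficient. I would look for solutions of
$$x^2-7y^2=2 \qquad\text{with } 7\mid y .$$
For these, $x^2-2=7y^2$ is divisible by $7\cdot 49=7^3$. Every other prime dividing $7y^2$ divides $y$, so it occurs to an even power. Hence $x^2-2$ is powerful.

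The equation has the fundamental solution $3+\sqrt7$, and the unit $8+3\sqrt7$ has norm $1$. So $x_k+y_k\sqrt7=(3+\sqrt7)(8+3\sqrt7)^k$ gives infinitely many positive solutions, with $x_k\to\infty$. These satisfy the linear recurrences
$$x_{k+1}=16x_k-x_{k-1},\qquad y_{k+1}=16y_k-y_{k-1}.$$

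The only real step is showing that $7\mid y_k$ for infinitely many $k$. Reduce the recurrence for $y_k$ modulo $7$: it becomes $y_{k+1}\equiv 2y_k-y_{k-1}$. Starting from $y_0=1$ and $y_1=17\equiv 3$, one gets $y_2=271\equiv 5$ and then $y_3\equiv 2\cdot 5-3\equiv 0 \pmod 7$. The recurrence is invertible modulo $7$, since $y_{k-1}=16y_k-y_{k+1}$. So the sequence $(y_k \bmod 7)$ is purely periodic, and $y_k\equiv 0$ recurs infinitely often.

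For each such $k$, set $m=x_k-2\ge 1$ and $N=m^2$. Then $N$, $N+d=(m+1)^2$ and $N+2d=x_k^2-2=7y_k^2$ are all powerful, with $d=2\sqrt N+1$, and distinct $k$ give distinct $N$. This gives infinitely many progressions, proving the theorem.

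I do not expect a serious obstacle. The only choice needing care was finding a coefficient $c$ for which $x^2-cy^2=2$ is solvable and one can force $c\mid y$. The modulus-$7$ periodicity check above settles this for $c=7$.
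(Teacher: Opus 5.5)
Your proposal is correct and follows essentially the same route as the paper: the two squares $(x-2)^2$ and $(x-1)^2$, together with $x^2-2=7y^2$ where $7\mid y$, taken from the Pell family $(3+\sqrt7)(8+3\sqrt7)^k$. The only cosmetic difference is how you get $7\mid y_k$ infinitely often: you use periodicity of the second-order recurrence modulo $7$, whereas the paper reduces the coupled first-order recurrence to get $y_k\equiv 1+2k \pmod 7$ explicitly, so that $7\mid y_k$ exactly when $k\equiv 3 \pmod 7$.
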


\begin{proof}
Let $(x,y) \in \mathbb{N}^2$ be any solution to the generalized Pell equation

\begin{equation} \label{eq:x343y2}
x^2 - 7^3y^2 = 2.
\end{equation}

Then the three numbers 

\begin{equation} \label{solutions}
(x-2)^2, \qquad (x-1)^2, \qquad 7^3y^2 = x^2-2
\end{equation}

are all powerful and form the arithmetic progression $$N, N+d, N+2d$$ with $$N = (x-2)^2 \qquad \text{and} \qquad d = 2x-3 = 2\sqrt{N} + 1.$$ Since one can verify that equation \eqref{eq:x343y2} has infinitely many solutions (in fact, we will do so in the next section), the proof is finished. 
\end{proof}

\subsection{Explicit construction} \label{constr}
To actually construct infinitely many solutions to equation \eqref{eq:x343y2}, we first solve 

\begin{equation} \label{eq:x7y2}
x^2 - 7y^2 = 2,
\end{equation}

and then restrict to solutions with $7 | y$. Now, the equations $$x^2 - 7y^2 = 2 \qquad \text{and} \qquad x^2 - 7y^2 = 1$$ have specific solutions $(3, 1)$ and $(8, 3)$ respectively, so the theory of Pell equations gives the general (positive) solution $(x_k, y_k)$ to equation \eqref{eq:x7y2}, where $$x_k + y_k \sqrt{7} = (3 + \sqrt{7})(8 + 3 \sqrt{7})^k.$$ To get a recurrence relation out of this, we can multiply by $8 + 3 \sqrt{7}$ to get
\begin{align*}
x_{k+1} + y_{k+1} \sqrt{7} &= (x_k + y_k \sqrt{7})(8 + 3 \sqrt{7}) \\
&= (8x_k + 21y_k) + (3x_k + 8y_k)\sqrt{7}.
\end{align*}

Now, recall that we are only looking for solutions where $7$ divides $y$, so by reducing the recurrence relation modulo $7$ we see that $$x_{k+1} = 8x_k + 21y_k \equiv x_k \pmod{7},$$ which implies $$x_k \equiv x_0 \equiv 3 \pmod{7}$$ for all $k \ge 0$. Substituting this into the recurrence for $y_k$ gives $$y_{k+1} = 3x_k + 8y_k \equiv y_k + 2 \pmod{7},$$ so that we obtain $$y_k \equiv y_0 + 2k \equiv 1 + 2k \pmod{7}$$ for all $k \ge 0$. Hence, $7 | y_k$ precisely when $k \equiv 3 \pmod{7}$. In other words, $(x_k, y_k)$ solves equation \eqref{eq:x343y2} if 
\begin{align*}
x_k + 7y_k \sqrt{7} &= (3 + \sqrt{7})(8 + 3 \sqrt{7})^3\big((8 + 3 \sqrt{7})^7\big)^k \\
&= (11427 + 7 \cdot 617\sqrt{7})\big((8 + 3 \sqrt{7})^7\big)^k.
\end{align*}

Repeating our earlier calculation with $(8 + 3 \sqrt{7})^7$ instead of $(8 + 3 \sqrt{7})$ gives the recurrence $(x_{k+1}, y_{k+1}) = (ax_k + by_k, cx_k + dy_k)$ for the solutions to equation \eqref{eq:x343y2}, with 
\begin{align*}
a &= 130576328, \\
b &= 2418307437, \\
c &= 7050459, \\
d &= 130576328.
\end{align*}

Finally, by eliminating $y_k$ from this recurrence, one finds that $x_k$ satisfies the linear recurrence
\begin{align}
x_0 &= 11427, \nonumber \\
x_1 &= 2984191388685, \nonumber \\
x_{k+2} &= 261152656x_{k+1} - x_k. \label{eq:recur}
\end{align}

Moreover, with initial values $$y_0 = 617, \qquad \text{and} \qquad y_1 = 161131189369,$$ the sequence of $y_k$ satisfies the same recurrence relation \eqref{eq:recur}.

\section{But are they consecutive?} \label{consec}
\subsection{A Diophantine condition}
The three integers in \eqref{solutions} are \emph{consecutive} powerful numbers if these are the only powerful numbers in the half-open interval\footnote{Whilst this is correct as stated, it is not quite an equivalence, as it could be the case that $x^2 - 1$ happens to be powerful as well. In this case, however, we would even have three consecutive integers which are all powerful, solving the Erd\H{o}s–Mollin–Walsh conjecture!}\,$\big[(x-2)^2, x^2\big)$. How often this happens for general $x \in \mathbb{N}$ is exactly the object of study in \cite{NT}, and the crux is that the density of $x \in \mathbb{N}$ for which the open intervals $$\big((x-2)^2, (x-1)^2\big) \qquad \text{and} \qquad \big((x-1)^2, x^2\big)$$ contain exactly zero and one powerful numbers respectively\footnote{In \cite{NT} this density is denoted by $d(\mathcal{A}_{0,1}^{(2)})$.} is roughly $0.108$. The naive heuristic (which considers an $x$ satisfying equation \eqref{eq:x343y2} as a typical integer) therefore predicts that there are infinitely many triples of the form \eqref{solutions} that consist of consecutive powerful numbers. We can be somewhat less naive, however. \\

As pointed out to the author by Thomas Bloom, the results in \cite{NT} are based on the following equivalence, where $\{\xi\}$ denotes the fractional part of a real number $\xi$.

\begin{lemma}[cf. {\cite[Lemma 3]{NT}}] \label{dio}
For all integers $x \ge 3$, the number of powerful integers $a \in \big((x-2)^2, x^2\big)$ is equal to the number of squarefree $m \in \mathbb{N}$ for which

\begin{equation} \label{eq:mdio}
\left\{\frac{x}{m^{3/2}} \right\} < \frac{2}{m^{3/2}}.
\end{equation}
\end{lemma}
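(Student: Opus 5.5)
The plan is to set up a bijection between the powerful numbers in $\big((x-2)^2, x^2\big)$ and the squarefree $m$ satisfying \eqref{eq:mdio}. It rests on the standard unique representation of powerful numbers.

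\textbf{Step 1 (representation).} Every powerful number $a$ can be written uniquely as $a = m^3k^2$ with $m$ squarefree and $k \ge 1$. For a prime $p$ with $p^e \| a$ and $e \ge 2$, put $p$ into $m$ exactly when $e$ is odd; the remaining exponent $e-3$ (if $e$ is odd) or $e$ (if $e$ is even) is even and goes into $k^2$. Uniqueness follows because the parity of $e$ determines whether $p \mid m$. Conversely, every number of the form $m^3k^2$ is powerful. So it suffices to count pairs $(m,k)$ with $m$ squarefree, $k \ge 1$, and $(x-2)^2 < m^3k^2 < x^2$. Taking square roots, this last condition is equivalent to
\[
\frac{x-2}{m^{3/2}} < k < \frac{x}{m^{3/2}}.
\]

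\textbf{Step 2 (each $m$ contributes at most one $k$, exactly when \eqref{eq:mdio} holds).} Write $M = m^{3/2}$.

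For $m = 1$, the open interval $(x-2, x)$ contains exactly one integer, $k = x-1$. Since $\{x\} = 0 < 2$, condition \eqref{eq:mdio} also holds, so the two sides agree.

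For squarefree $m \ge 2$, the number $M$ is irrational and $M \ge 2\sqrt2 > 2$. Hence the interval $\big((x-2)/M, x/M\big)$ has length $2/M < 1$ and contains at most one integer. Because $x/M$ is irrational, no integer sits at either endpoint. The only candidate is therefore $k = \lfloor x/M \rfloor$, and it lies in the interval iff $x/M - \lfloor x/M\rfloor < 2/M$, which is exactly \eqref{eq:mdio}.

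\textbf{Step 3 (positivity of $k$).} We must rule out $k = 0$. If $\lfloor x/M \rfloor = 0$ satisfied the inequality, then $x/M < 2/M$, i.e.\ $x < 2$, contradicting $x \ge 3$. So every $k$ produced in Step 2 is a positive integer.

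Combining the three steps, the map $a \mapsto m$ is a bijection from powerful $a \in \big((x-2)^2, x^2\big)$ onto the squarefree $m$ satisfying \eqref{eq:mdio}.

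The only delicate point is handling the endpoints and the case $m = 1$ separately. The irrationality of $m^{3/2}$ for squarefree $m \ge 2$ is what makes the strict and non-strict inequalities interchangeable. I expect no real obstacle beyond this.
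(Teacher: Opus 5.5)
Your proof is correct and follows the same route as the paper: write each powerful $a$ uniquely as $l^2m^3$ with $m$ squarefree, then translate $(x-2)^2 < l^2m^3 < x^2$ into $(x-2)/m^{3/2} < l < x/m^{3/2}$. You are more careful than the paper, which simply asserts that $l$ is unique and positive, whereas you justify this via the interval length $2/m^{3/2} < 1$ for $m \ge 2$, the separate case $m = 1$, the irrationality of $x/m^{3/2}$ at the right endpoint, and the hypothesis $x \ge 3$.
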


\begin{proof}
For a fixed squarefree $m \in \mathbb{N}$, if inequality \eqref{eq:mdio} holds, then there is a unique positive integer $l$ with $$\frac{x-2}{m^{3/2}} < l < \frac{x}{m^{3/2}}.$$ Multiplying by $m^{3/2}$ and squaring then gives us $$(x-2)^2 < l^2m^3 < x^2,$$ so that the powerful number $a = l^2m^3$ lies in the interval $\big((x-2)^2, x^2\big)$. \\

Conversely, any powerful number $a$ can be uniquely written as $a = l^2m^3$ for some $l, m \in \mathbb{N}$ with $m$ squarefree. And if we have $$(x-2)^2 < l^2m^3 < x^2,$$ then we see that inequality \eqref{eq:mdio} holds by taking the square root and dividing by $m^{3/2}$. 
\end{proof}

In our case $m = 1$ in Lemma \ref{dio} corresponds to $(x-1)^2$ and $m = 7$ corresponds to $x^2 - 2 = 7^3 y^2$. We therefore have the following corollary, by observing that equality in \eqref{eq:mdio} cannot occur for squarefree $m$ and $x \ge 3$.

\begin{corollary} \label{infiniteiff}
There are infinitely many $x$ for which the integers in \eqref{solutions} are consecutive powerful integers if there are infinitely many $x_k$ generated by the recurrence relation \eqref{eq:recur} for which the inequality

\begin{equation} \label{eq:mdioreverse}
\left\{\frac{x_k}{m^{3/2}} \right\} > \frac{2}{m^{3/2}}
\end{equation}

holds for all squarefree $m \in \mathbb{N} \setminus \{1, 7\}$.
\end{corollary}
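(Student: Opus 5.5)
The plan is to apply Lemma \ref{dio} to each $x = x_k$ produced by the recurrence \eqref{eq:recur}, and then pass from the lemma's ``number of powerful integers in $\big((x-2)^2, x^2\big)$'' to the statement that the three numbers in \eqref{solutions} are consecutive. Every $x_k$ solves \eqref{eq:x343y2} (Section \ref{constr}) and satisfies $x_k \ge 11427 \ge 3$, so the lemma applies. The counting by squarefree $m$ goes as follows.

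First we check that $m = 1$ and $m = 7$ always satisfy \eqref{eq:mdio}. For $m=1$ we have $\{x_k\} = 0 < 2$. This gives $l = x_k - 1$, that is, the powerful number $(x_k-1)^2$. For $m = 7$ we use $x_k^2 - 2 = 7^3y_k^2$. This says $7^{3/2}y_k = \sqrt{x_k^2-2}$, which lies strictly between $x_k - 2$ and $x_k$. Hence $l = y_k$ is an integer strictly inside $\big((x_k-2)/7^{3/2}, x_k/7^{3/2}\big)$, and so \eqref{eq:mdio} holds for $m=7$ with $a = 7^3y_k^2$. The lemma's correspondence $a = l^2m^3$ is bijective between powerful $a$ in the interval and pairs $(m,l)$, with at most one $l$ per $m$ since the interval for $l$ has length $2/m^{3/2} \le 2$, and length $<1$ when $m\ge 2$. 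So the powerful numbers in $\big((x_k-2)^2, x_k^2\big)$ are exactly $(x_k-1)^2$, $7^3y_k^2$, and one number for each further squarefree $m$ satisfying \eqref{eq:mdio}.

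Next, suppose \eqref{eq:mdioreverse} holds for all squarefree $m \notin \{1,7\}$. We must rule out equality in \eqref{eq:mdio}, so that the negation of \eqref{eq:mdio} is really the strict reverse inequality \eqref{eq:mdioreverse}. For $m \ge 2$ squarefree, $m^{3/2}$ is irrational. Equality $\{x/m^{3/2}\} = 2/m^{3/2}$ would give $x - 2 = j m^{3/2}$ for an integer $j$. This is impossible, since $x-2 \ge 1$ forces $j \ne 0$, and then $m^{3/2}$ would be rational. So if \eqref{eq:mdioreverse} holds for all other $m$, the only powerful numbers in $\big((x_k-2)^2, x_k^2\big)$ are $(x_k-1)^2$ and $7^3y_k^2$. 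Since $7^3y_k^2 = x_k^2-2 < x_k^2$, no powerful number lies strictly between $(x_k-2)^2$ and $(x_k-1)^2$, or strictly between $(x_k-1)^2$ and $x_k^2-2$. The three numbers in \eqref{solutions} are therefore consecutive powerful numbers.

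Finally, the $x_k$ are strictly increasing, because the recurrence has dominant root greater than $1$ and positive initial data. So infinitely many such $k$ give infinitely many distinct $x$. The only step needing care is the irrationality argument excluding equality; everything else is bookkeeping from Lemma \ref{dio}.
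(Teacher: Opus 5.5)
Your proposal is correct and takes the same route as the paper. You apply Lemma \ref{dio} to $x = x_k$, identify $m = 1$ and $m = 7$ with $(x_k-1)^2$ and $7^3y_k^2 = x_k^2-2$, and conclude that under \eqref{eq:mdioreverse} these are the only powerful numbers in $\big((x_k-2)^2, x_k^2\big)$. One small remark: for the stated one-directional corollary, your irrationality step excluding equality is harmless but unnecessary, since the strict inequality \eqref{eq:mdioreverse} already negates \eqref{eq:mdio}; ruling out equality only matters for the converse direction.
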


\subsection{Provability} \label{provable}
In order to apply Corollary \ref{infiniteiff} we need to get a hold on the fractional parts of $x_k$. Here we have the following modest result.

\begin{theorem} \label{weakstuff}
For every fixed squarefree $m \ge 648560$ there are infinitely many $k \in \mathbb{N}$ such that inequality \eqref{eq:mdioreverse} holds.
\end{theorem}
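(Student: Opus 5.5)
The plan is to argue by contradiction, using only the linear recurrence \eqref{eq:recur} and the irrationality of $m^{3/2}$. The threshold $648560$ is exactly what makes $m^{3/2} > 2T$, where $T = 261152656$ is the coefficient in \eqref{eq:recur}; indeed $648560^{3/2} \approx 5.223\cdot 10^{8} > 2T$. So I expect the proof to be a rigidity argument: if \eqref{eq:mdioreverse} failed for all large $k$, the fractional parts would be forced to obey the recurrence exactly, and that is impossible.

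First, fix a squarefree $m \ge 648560$ and set $M = m^{3/2}$. Suppose that \eqref{eq:mdioreverse} fails for all $k \ge K$. As noted before Corollary \ref{infiniteiff}, equality cannot occur, so we may write $x_k = l_k M + \delta_k$ with $l_k \in \mathbb{Z}$ and $\delta_k = \{x_k/M\}\, M \in [0,2)$ for all $k \ge K$.

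Next, substitute this into \eqref{eq:recur}. This gives
$$(l_{k+2} - T l_{k+1} + l_k)\, M = T\delta_{k+1} - \delta_k - \delta_{k+2}.$$
The right-hand side lies in $(-4, 2T)$. Since $M > 2T \ge 4$, its absolute value is less than $M$. Hence the integer $l_{k+2} - Tl_{k+1} + l_k$ must be $0$. Consequently both $(l_k)$ and $(\delta_k)$ satisfy \eqref{eq:recur} for $k \ge K$.

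Now use the explicit form of solutions. Let $\varepsilon = (8+3\sqrt7)^7$, whose conjugate is $\varepsilon^{-1}$ and which satisfies $\varepsilon+\varepsilon^{-1} = T$. Every solution of \eqref{eq:recur} has the form $A\varepsilon^k + B\varepsilon^{-k}$. Since $(\delta_k)$ is bounded, its $\varepsilon^k$-coefficient vanishes. On the other hand, from Section \ref{constr} we have
$$x_k = \tfrac12\left(u\varepsilon^k + \bar u \varepsilon^{-k}\right), \qquad u = 11427 + 7\cdot 617\sqrt7 .$$
Comparing $\varepsilon^k$-coefficients in $x_k = l_k M + \delta_k$ shows that the $\varepsilon^k$-coefficient of $l_k$ equals $u/(2M)$.

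The step I expect to carry the actual content, though it should be short, is the following. Since $l_K, l_{K+1}$ are integers, solving the $2\times 2$ linear system for the coefficients of $(l_k)$ shows that both coefficients lie in $\mathbb{Q}(\sqrt7)$. Hence $u/(2M) \in \mathbb{Q}(\sqrt7)$. Because $u \neq 0$, this gives $M = m\sqrt m \in \mathbb{Q}(\sqrt7)$, so $\sqrt m \in \mathbb{Q}(\sqrt7)$. For squarefree $m$ this forces $m \in \{1,7\}$, contradicting $m \ge 648560$.

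Therefore \eqref{eq:mdioreverse} holds for infinitely many $k$; in fact the argument shows it holds at least once in every window of length $K$ onwards. The main thing to double-check is the bound on the right-hand side, including the strictness of $M > 2T$, which is where the constant $648560$ enters.
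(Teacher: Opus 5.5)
Your proof is correct, but it follows a different route from the paper's. The paper argues by citation. It writes $x_k = B\alpha^k + C\alpha^{-k}$ and applies \cite[Theorem 1.3]{CYZ} (under condition $(c')$), a general theorem on fractional parts of linear recurrence sequences. This gives that the limit points of $\{x_k/m^{3/2}\}$ cannot all lie in an interval of length less than $1/L(R)$, where $L(R)=T+2$ is the length of the characteristic polynomial. Since $1/L(R) > 2/m^{3/2}$, the limit points cannot all lie in $[0,2/m^{3/2}]$.

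You instead prove the needed special case by hand. Failure along a tail forces the integer $l_{k+2}-Tl_{k+1}+l_k$ to vanish. Boundedness then kills the $\varepsilon^k$-component of $\delta_k$, and integrality of $l_K,l_{K+1}$ puts $m^{3/2}$ in $\mathbb{Q}(\sqrt7)$. This is in essence the argument behind results of the cited type, specialised to an interval at $0$.

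What your version buys:
\begin{itemize}
\item It is self-contained.
\item It makes clear exactly where $m\notin\{1,7\}$ enters; the paper does not spell out how condition $(c')$ is checked.
\item Because the interval starts at $0$, the sign pattern of $R$ means you only need $M>2T$ rather than $M>2L(R)=2T+4$.
\end{itemize}
Both conditions give the same integer threshold: $648560^{3/2}\approx 522306271$ exceeds $2T+4=522305316$, while $648559^{3/2}\approx 522305063$ is below $2T=522305312$. Your estimate $5.223\cdot 10^8$ does not by itself separate these, so you were right to flag it. What the paper's version buys is the stronger statement that no short interval at all contains every limit point, at the price of relying on an external theorem.

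One small correction: your closing claim that the argument yields a success in every window of length $K$ is not justified. The step that forces the $\varepsilon^k$-coefficient of $\delta_k$ to vanish uses boundedness along an entire infinite tail. What you have shown is precisely that there is no tail of failures, i.e.\ infinitely many successes, with no control on the gaps between them.
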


\begin{proof}
Let $m \ge 648560$ be a fixed squarefree integer, and define 

\begin{equation} \label{char}
R(t) := t^2 - 261152656t + 1,
\end{equation}

which is the characteristic equation for the recurrence \eqref{eq:recur}. Writing $A := 130576328$, the roots of $R$ are then given by $$\alpha := A + \sqrt{A^2-1} \qquad \text{and} \qquad \alpha^{-1}.$$ With $x_k = B\alpha^k + C\alpha^{-k}$, we can calculate $B$ and $C$ by the equations
\begin{align*}
x_0 &= B + C, \\
x_1 &= B\alpha + C\alpha^{-1},
\end{align*}

which gives $$x_k = \left(\frac{x_1\alpha - x_0}{\alpha^2 - 1}\right) \alpha^k + \left(\frac{x_0\alpha^2 - x_1 \alpha}{\alpha^2-1}\right) \alpha^{-k}$$ for all $k \ge 0$. By applying \cite[Theorem 1.3]{CYZ} with 
\begin{align*}
\alpha_1 &= \alpha, \\
\alpha_2 &= \alpha^{-1}, \\
F_1(k) &= \frac{x_1\alpha - x_0}{m^{3/2}(\alpha^2 - 1)}, \\
F_2(k) &= \frac{x_0\alpha^2 - x_1 \alpha}{m^{3/2}(\alpha^2-1}), \\
L(R) &= 261152658,
\end{align*}

and condition $(c')$, we may conclude that the set of limit values of $\left\{\frac{x_k}{m^{3/2}} \right\}$ is not contained within an interval of length smaller than $\frac{1}{L(R)} > \frac{2}{m^{3/2}}$, where the latter inequality follows from the assumption $m \ge 648560$.
\end{proof}

Now, of course we need infinitely many $k$ for which inequality \eqref{eq:mdioreverse} holds for \emph{all} $m \notin \{1, 7\}$, which Theorem \ref{weakstuff} is a far cry away from. Let us therefore briefly move away from what we can prove at the moment, to what we are tempted to believe.

\subsection{Heuristics}
With Corollary \ref{infiniteiff} we can update our naive heuristic from before. For a generic $x$ and a fixed squarefree $m \notin \{1, 7\}$, the probability that inequality \eqref{eq:mdioreverse} holds is $1 - \frac{2}{m^{3/2}}$. Hence, the density of $x$ for which one expects inequality \eqref{eq:mdioreverse} to hold for all squarefree $m \notin \{1, 7\}$ is equal to $$\prod_{\substack{m \notin \{1, 7\} \\ m \text{ squarefree}}} \left(1 - \frac{2}{m^{3/2}} \right) \approx 0.055.$$ Such a heuristic can be made formal (see \cite{Shiu} and \cite{NT}), if we let $x$ run over all positive integers. In our case however, we are only interested in the $x_k$ as defined by recurrence \eqref{eq:recur}. For this sequence, using the characteristic equation \eqref{char} one can verify that the number of $x_k^2 \le n$ is asymptotically $$\frac{\log n}{2\log \left(A+\sqrt{A^2-1} \right)}$$ where $A = 130576328$. Hence, assuming that these $x_k$ behave in this regard like generic integers, we arrive at the following conjecture.

\begin{conjecture} \label{optimism}
The number of arithmetic progressions $N, N+d, N+2d$ of consecutive powerful numbers in the interval $[1, n]$ which are of the form $$(x-2)^2, \qquad (x-1)^2, \qquad 7^3y^2 = x^2-2$$ is equal to $\big(C_7 + o(1)\big)\log n$, where $$C_7 = \frac{1}{2} \prod_{\substack{m \notin \{1, 7\} \\ m \text{ squarefree}}} \left(1 - \frac{2}{m^{3/2}} \right) \Big(\log \left(A+\sqrt{A^2-1} \right)\Big)^{-1} \approx 0.0014.$$
\end{conjecture}

Of course, similar conjectures can be put forth for different squarefree integers in place of $7$. The reason we singled out the latter is that, with $M$ the set of all squarefree $m$ for which $x^2 - m^3y^2 = 2$ is solvable, it just so happens that $7$ is the smallest element of $M$. \\

In general it is unfortunately not obvious at a glance whether a given $m$ belongs to $M$, although \cite[Theorem 2]{Moll} states that this is equivalent to the congruence $u_0 \equiv 1 \pmod{m^3}$, where $(u_0, v_0)$ is the fundamental solution to $x^2 - m^3y^2 = 1$. Generalizing $C_7$ from Conjecture \ref{optimism} to $C_m$ for any $m \in M$, it would be interesting to decide whether $\sum_{m \in M} C_m$ converges.

\section{Alternative routes} \label{alt}
\subsection{A partial converse}
Two of the three elements of the arithmetic progression that we constructed in Section \ref{pell} are squares. As it turns out, in the present context this latter property uniquely determines our family of solutions. In order to precisely state what we mean by this, define $\mathcal{A}$ to be the set of all $N \in \mathbb{N}$ for which a $d \in \mathbb{N}$ exists such that $N, N+d, N+2d$ is a three-term arithmetic progression of consecutive powerful numbers. Note that such a $d$, if it exists, must be unique. So any $N \in \mathcal{A}$ is accompanied by a corresponding $d$. We can therefore write $\mathcal{A}$ as the disjoint union

\begin{equation} \label{eq:union}
\mathcal{A} = \mathcal{A}_0 \sqcup \mathcal{A}_1 \sqcup \mathcal{A}_2,
\end{equation}

where $\mathcal{A}_i$ (for $0 \le i \le 2$) is the set of all $N$ for which the corresponding triple $(N, N+d, N+2d)$ contains exactly $i$ squares. Note that the union in equation \eqref{eq:union} does indeed equal the full set $\mathcal{A}$, as three consecutive squares are never in arithmetic progression. We then have the following partial converse to the construction from Section \ref{pell}.

\begin{lemma} \label{iftwosquares}
For any $N \in \mathcal{A}$ we have $N \in \mathcal{A}_2$ if, and only if, for some $x \ge 3$ we have 

\begin{equation} \label{twosquares}
N = (x-2)^2, \qquad N+d = (x-1)^2, \qquad N+2d = x^2-2.
\end{equation}
\end{lemma}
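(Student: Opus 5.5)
The plan is to prove both directions directly. The only tool needed is the observation that squares are powerful, so no perfect square can lie strictly between two consecutive powerful numbers. Throughout, $N, N+d, N+2d$ are consecutive powerful numbers with $d \ge 1$.

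For the ``if'' direction, suppose $N$ has the form \eqref{twosquares} with $x \ge 3$. Two of the three terms are visibly squares. For the third term, note that $x \ge 3$ gives $(x-1)^2 < x^2 - 2 < x^2$, so $x^2-2$ lies strictly between consecutive squares and is not a square. Hence the triple contains exactly two squares, and $N \in \mathcal{A}_2$.

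For the ``only if'' direction, let $N \in \mathcal{A}_2$. I would split into three cases according to which two of the three terms are squares.

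\emph{Case 1: $N = a^2$ and $N+d = b^2$.} There is no square strictly between $N$ and $N+d$, so $b = a+1$ and $d = 2a+1$. Then $N+2d = a^2+4a+2 = (a+2)^2 - 2$. Setting $x = a+2$ gives exactly \eqref{twosquares}. Since $a \ge 1$, we get $x \ge 3$.

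\emph{Case 2: $N+d = b^2$ and $N+2d = c^2$.} As before, $c = b+1$ and $d = 2b+1$. Hence $N = b^2 - 2b - 1 = (b-1)^2 - 2$. Since $N \ge 1$, we have $b \ge 3$, and then $N < (b-1)^2 < b^2 = N+d$. So the powerful number $(b-1)^2$ lies strictly between two consecutive terms, which is a contradiction. This case cannot occur.

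\emph{Case 3: $N = a^2$ and $N+2d = c^2$, with $N+d$ not a square.} No square lies strictly between $a^2$ and $c^2$, so $c = a+1$ and $2d = 2a+1$. This is impossible, since the left-hand side is even and the right-hand side is odd.

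Only Case 1 survives, which gives the claimed form. There is no real obstacle; the step needing most care is Case 2, where one must check positivity ($b \ge 3$) so that $(b-1)^2$ genuinely lies strictly inside the interval $(N, N+d)$.
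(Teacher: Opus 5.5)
Your proof is correct and follows the paper's argument essentially verbatim: the same three-way case split on which pair of terms are (necessarily consecutive) squares, with the parity obstruction ruling out the pair $N, N+2d$ and the intruding square $(b-1)^2 \in (N, N+d)$ ruling out the pair $N+d, N+2d$. You additionally spell out the easy ``if'' direction (that $x^2-2$ is not a square for $x \ge 3$) and the positivity check $b \ge 3$ in Case 2, both of which the paper leaves implicit.
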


\begin{proof}
As $N, N+d, N+2d$ are assumed to be consecutive elements in the sequence of powerful numbers, there cannot be a square other than possibly $N+d$ in the interval $(N, N+2d)$. This implies that if two of them are squares, then they must be consecutive squares. \\

We therefore have three options: either $N$ and $N+d$ are consecutive squares, or $N$ and $N+2d$ are consecutive squares, or $N+d$ and $N+2d$ are consecutive squares. The first case reduces to \eqref{twosquares}, while the second case is impossible as the difference between consecutive squares is never even. Finally, in the third case let us write $N+d = x^2$. Then we find $d = 2x+1$, in which case $$N = x^2 - 2x - 1 < (x-1)^2 < N+d,$$ contradicting the non-existence of other squares in the interval $(N, N+2d)$.
\end{proof}

\subsection{More heuristics}
Of course equation \eqref{eq:union} begs the question which, if any, of the sets $\mathcal{A}_0, \mathcal{A}_1, \mathcal{A}_2$ are non-empty or even infinite. Conjecture \ref{optimism} predicts that $$|\mathcal{A}_2 \cap \{1, 2, \ldots, n\}| \gg \log n,$$ while Lemma \ref{iftwosquares} shows that all $N \in \mathcal{A}_2$ must come from Pell equations. In fact, with $M$ and $C_m$ as defined in the discussion following Conjecture \ref{optimism}, if $\sum_{m \in M} C_m$ converges, one expects $$|\mathcal{A}_2 \cap \{1, 2, \ldots, n\}| \asymp \log n.$$ As for $\mathcal{A}_0$ and $\mathcal{A}_1$, recall that for a positive density of $x \in \mathbb{N}$ there are exactly three powerful integers $a, b, c$ with $(x-1)^2 < a < b < c < x^2$. The naive heuristic would view $a$, $b$ and $c$ as chosen randomly from the interval $\big((x-1)^2, x^2\big)$, which makes the probability that $a, b, c$ are in arithmetic progression $\asymp \frac{1}{x}$. Similarly, the probability that $b, c, x^2$ are in arithmetic progression is also $\asymp \frac{1}{x}$. As the density of $x \in \mathbb{N}$ for which there are exactly $k$ powerful integers in the interval $\big((x-1)^2, x^2\big)$ goes to $0$ with $k$ superpolynomially fast (see e.g. \cite[equation $(6)$]{NT}), they are not expected to influence the asymptotic. Summing the quantity $\asymp \frac{1}{x}$ over all $x$ leads to logarithmic growth as well, which tempts us to posit the following conjecture.

\begin{conjecture} \label{moreoptimism}
For all $i \in \{0, 1, 2\}$ we have $$|\mathcal{A}_i \cap \{1, 2, \ldots, n\}| \asymp \log n.$$ In particular, $$|\mathcal{A} \cap \{1, 2, \ldots, n\}| \asymp \log n.$$
\end{conjecture}

\subsection{Data} \label{data}
The theoretical evidence for Conjecture \ref{moreoptimism} is arguably somewhat flimsy, and the wish is certainly father to the thought in this matter. We therefore searched for examples as well, and below $10^{14}$ we managed to find $18$ triples of consecutive powerful numbers in arithmetic progression. That is, $$|\mathcal{A} \cap \{1, 2, \ldots, 10^{14}\}| = 18.$$ Perhaps surprisingly, every single one of these $18$ examples is actually an element of $\mathcal{A}_1$; see Table \ref{tab:consecutive-powerful-ap}.

\clearpage
\begin{table}[ht]
\centering
\begin{tabular}{rrrrr}
\toprule
$N$ & $N+d$ & $N+2d$ & $d$ & $d/\sqrt{N}$ \\
\midrule
1728 & 1764 & 1800 & 36 & 0.866 \\
6912 & 7056 & 7200 & 144 & 1.732 \\
729000 & 729316 & 729632 & 316 & 0.370 \\
1458000 & 1458632 & 1459264 & 632 & 0.523 \\
2916000 & 2917264 & 2918528 & 1264 & 0.740 \\
11664000 & 11669056 & 11674112 & 5056 & 1.480 \\
149022674775 & 149022848000 & 149023021225 & 173225 & 0.449 \\
260102040004 & 260102223752 & 260102407500 & 183748 & 0.360 \\
348796224200 & 348796548100 & 348796872000 & 323900 & 0.548 \\
697592448400 & 697593096200 & 697593744000 & 647800 & 0.776 \\
1040408160016 & 1040408895008 & 1040409630000 & 734992 & 0.721 \\
1206916971500 & 1206917268552 & 1206917565604 & 297052 & 0.270 \\
1395184896800 & 1395186192400 & 1395187488000 & 1295600 & 1.097 \\
2413833943000 & 2413834537104 & 2413835131208 & 594104 & 0.382 \\
4827667886000 & 4827669074208 & 4827670262416 & 1188208 & 0.541 \\
10862252743500 & 10862255416968 & 10862258090436 & 2673468 & 0.811 \\
21724505487000 & 21724510833936 & 21724516180872 & 5346936 & 1.147 \\
60345848575000 & 60345863427600 & 60345878280200 & 14852600 & 1.912 \\
\bottomrule
\end{tabular}
\caption{Three consecutive powerful integers in arithmetic progression found up to $10^{14}$.}
\label{tab:consecutive-powerful-ap}
\end{table}

It is perhaps noteworthy that in the above table it happens more than once that both $N, N+d, N+2d$ and $2N, 2N+2d, 2N+4d$ occur as triples. Of course, whether this happens infinitely often remains out of reach.

\end{document}